\newtheorem{thm}{Theorem}[section]
\theoremstyle{definition}
\theoremstyle{remark}
\numberwithin{equation}{section}
\begin{document}

\title[Burnside's formula]{Sharp inequalities related with Burnside's formula}%
\author{necdet batir}%
\address{department of  mathematics, nev{\c{s}}ehir hbv university, nev{\c{s}}ehir, 50300 turkey}%
\email{nbatir@hotmail.com}%

%\thanks{}%
\subjclass{33B15, 26D07}%
\keywords{Burnside's formula, Stirling's formula, factorial function, inequalities}%

%\date{}%
%\dedicatory{}%
%\commby{}%
% ----------------------------------------------------------------
\begin{abstract} We prove the following double inequality related with Burnside's formula for $n!$
\begin{equation*}
\sqrt{2\pi}\left(\frac{n+a_*}{e}\right)^{n+a_*}<n!<\sqrt{2\pi}\left(\frac{n+a^*}{e}\right)^{n+a^*}\,(n\in\mathbb{N}),
\end{equation*}
where the constants $a_*=0.428844044...$ and $a^*=0.5$ are the best possible. We believe that the method we used in the proof gives insight to undergraduate students to understand  how simple inequalities can be established.
\end{abstract}
\maketitle
% ----------------------------------------------------------------
\section{introduction}
In a paper written in 1733 the French mathematician Abraham de Moivre developped the formula
\begin{equation*}
n!\sim C\cdot\sqrt{n}n^ne^{-n},
\end{equation*}
where $C$ is a constant. He was unable, however, to evaluate numerical value of this constant; this task befell a Scot mathematician James Stirling (1692-1770), who found  $C=\sqrt{2\pi}$. Formula
\begin{equation}\label{e:1}
n!\sim n^ne^{-n}\sqrt{2\pi n}
\end{equation}
is  known as Stirling's formula today. It is known that this formula has many applications in statistical physics, probability theory and number theory. The most well known approximation formula for factorial function after Stirling formula is Burnside's formula \cite{2}, which is given by
\begin{equation}\label{e:2}
n!\sim\sqrt{2\pi}\left(\frac{n+\frac{1}{2}}{e}\right)^{n+\frac{1}{2}}.
\end{equation}
It is known that Burnside's formula is more accurate than Stirling's formula. The gamma function $\Gamma$ is defined by the improper integral, for $x>0$:
\begin{equation*}
\Gamma(x)=\int\limits_0^\infty t^{x-1}e^{-t}dt.
\end{equation*}
An important function related to $\Gamma$ is the digamma function $\psi$, which is defined by $\psi(x)=\Gamma'(x)/\Gamma(x)$ for $x>0$. The gamma function $\Gamma$ and factorial function $n!$ are related with  $\Gamma(n+1)=n!$, for all $n\in\mathbb{N}$. In the literature there are many inequalities related to Burnside's formula, for example
\begin{equation}\label{e:3}
\sqrt{2e}\left(\frac{n+\frac{1}{2}}{e}\right)^{n+\frac{1}{2}}<n!<e^{(1+\gamma)e^{-\gamma}}\left(\frac{n+e^{-\gamma}}{e}\right)^{n+e^{-\gamma}}.
\end{equation}
see \cite[Theorem 1.4]{1}, and
\begin{equation}\label{e:4}
\sqrt{2e}\left(\frac{n+\frac{1}{2}}{e}\right)^{n+\frac{1}{2}}<n!<\sqrt{2\pi}\left(\frac{n+\frac{1}{2}}{e}\right)^{n+\frac{1}{2}},
\end{equation}
see  \cite[Theorem 1.5]{1}. Lu \cite{4}  provided an asymptotic expansion for the gamma function starting from Burnside's formula. Chen \cite{3} found the following asymptotic expansion starting from Burnside's formula
\begin{equation*}
n!\sim\sqrt{2\pi}\left(\frac{n+\frac{1}{2}-\frac{1}{24}\frac{1}{(n+1/2)^2}+\frac{19}{5760}\frac{1}{(n+1/2)^3}+\cdots}{e}\right)^{n+\frac{1}{2}}.
\end{equation*}
In this short note, motivated by the inequalities (\ref{e:3}) and (\ref{e:4}), we determine the largest number $\alpha$ and the smallest number $\beta$ in such away that the following inequalities hold for all $n\in\mathbb{N}$:
 \begin{equation*}
\sqrt{2\pi}\left(\frac{n+\alpha}{e}\right)^{n+\alpha}<n!<\sqrt{2\pi}\left(\frac{n+\beta}{e}\right)^{n+\beta}.
\end{equation*}
We prove that the best possible constants $\alpha$ and $\beta$ satisfying these inequalities are $\alpha=0.428844044...$ and $\beta=1/2$. The right hand side inequality here is already known, but we want to emphasize that we have shown here that the scaler $a^*=1/2$ here can not be replaced by a smaller quantity. We believe that the methods we used in the proof help undergraduate students to gain insight in establishing simple inequalities.

\section{main result}
Our main result is the following theorem.
\begin{thm} For all $n\in\mathbb{N}$ we have
\begin{equation}\label{e:5}
\sqrt{2\pi}\left(\frac{n+a_*}{e}\right)^{n+a_*}<n!<\sqrt{2\pi}\left(\frac{n+a^*}{e}\right)^{n+a^*},
\end{equation}
where the constants $a_*=0.428844044...$ and $a^*=0.5$ are the best possible.
\end{thm}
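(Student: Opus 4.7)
For each $n\geq 1$ I would define $\alpha_n$ implicitly by $n!=\sqrt{2\pi}\bigl((n+\alpha_n)/e\bigr)^{n+\alpha_n}$. Existence and uniqueness of such an $\alpha_n\in(0,\infty)$ are immediate: the right-hand side is strictly increasing in $a$ (its logarithmic derivative equals $\ln(n+a)>0$ for $a\geq 0$), it equals $\sqrt{2\pi}(n/e)^n<n!$ at $a=0$ by the classical Stirling lower bound, and it tends to $+\infty$ as $a\to\infty$. The theorem then reduces to three claims: $\alpha_n<1/2$ for all $n$, the sequence $(\alpha_n)$ is strictly increasing, and $\alpha_n\to 1/2$.

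\textbf{Step 1 (upper bound).} I would set $F(n):=\ln n!-(n+\tfrac12)\ln(n+\tfrac12)+(n+\tfrac12)-\tfrac12\ln(2\pi)$ and use the elementary identity $\int_a^{a+1}\ln t\,dt=(a+1)\ln(a+1)-a\ln a-1$ to compute
$$F(n+1)-F(n)=\ln(n+1)-\int_{n+1/2}^{n+3/2}\ln t\,dt.$$
Strict concavity of $\ln$ implies the midpoint bound $\int_{n+1/2}^{n+3/2}\ln t\,dt<\ln(n+1)$, so $F$ is strictly increasing. Burnside's formula~\eqref{e:2} then gives $F(n)\to 0$, whence $F(n)<0$ for every $n\geq 1$; this is equivalent to $\alpha_n<1/2$.

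\textbf{Step 2 (monotonicity of $\alpha_n$).} The map $\Phi_n(a):=\ln(n+1)!-(n+1+a)\ln(n+1+a)+(n+1+a)-\tfrac12\ln(2\pi)$ is strictly decreasing in $a$ with unique zero at $a=\alpha_{n+1}$. Using $\ln(n+1)!=\ln(n+1)+\ln n!$, the defining equation for $\alpha_n$, and the same integral identity, I would show
$$\Phi_n(\alpha_n)=\ln(n+1)-\int_{n+\alpha_n}^{n+1+\alpha_n}\ln t\,dt.$$
Because $\alpha_n<1/2$ by Step~1, the midpoint $n+\alpha_n+\tfrac12$ of this unit interval is strictly less than $n+1$, so concavity of $\ln$ gives $\int_{n+\alpha_n}^{n+1+\alpha_n}\ln t\,dt<\ln(n+\alpha_n+\tfrac12)<\ln(n+1)$; thus $\Phi_n(\alpha_n)>0$, forcing $\alpha_{n+1}>\alpha_n$.

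\textbf{Step 3 (sharpness and main obstacle).} Rewriting $F(n)=\int_{1/2}^{\alpha_n}\ln(n+t)\,dt$, any hypothetical limit $L<1/2$ of the increasing bounded sequence $(\alpha_n)$ would force $F(n)\leq-(1/2-\alpha_n)\ln(n+\alpha_n)\to-\infty$, contradicting $F(n)\to 0$; hence $\alpha_n\to 1/2$, so $a^*=1/2$ is the smallest upper constant that works. Monotonicity then makes $\alpha_1$ the largest admissible lower constant, because any $a>\alpha_1$ violates the inequality already at $n=1$; numerical solution of $1=\sqrt{2\pi}\bigl((1+\alpha_1)/e\bigr)^{1+\alpha_1}$, for example by Newton's method on the single-variable equation $(1+\alpha_1)[\ln(1+\alpha_1)-1]=-\tfrac12\ln(2\pi)$, gives $\alpha_1=0.428844044\ldots$. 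The only subtlety I anticipate is that Step~2 genuinely uses the conclusion of Step~1, so the order of the three steps is essential; beyond that, the entire argument is powered by one elementary observation, namely that the midpoint rule overestimates the integral of a strictly concave function on a unit interval.
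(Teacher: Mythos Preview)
Your argument is correct and is genuinely different from the paper's. The paper does not introduce the sequence $(\alpha_n)$; instead it (i) quotes the upper bound $n!<\sqrt{2\pi}\bigl((n+\tfrac12)/e\bigr)^{n+1/2}$ from~\cite{1}, (ii) shows $a^*\ge\tfrac12$ by letting $n\to\infty$ in the log of the assumed upper inequality and reading off a term $(\tfrac12-a^*)\log n$, (iii) gets $a_*\le 0.428844\ldots$ by plugging in $n=1$, and (iv) proves the lower inequality by differentiating $G_b(t)=\tfrac12\log(2\pi)+(t+b)\log(t+b)-(t+b)-\log\Gamma(t+1)$, obtaining $G_b'(t)=\log(t+b)-\psi(t+1)$, and invoking an external lemma on the digamma function (\cite[Lemma~1.7]{1}) to conclude that $G_{a_*}$ is decreasing. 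Your route replaces all of this by the single Hermite--Hadamard/midpoint inequality for the strictly concave function $\log$: it yields the upper bound (your Step~1), the strict monotonicity of $\alpha_n$ (Step~2, which simultaneously proves the lower bound with the sharp constant $\alpha_1$), and the limit $\alpha_n\to\tfrac12$ (Step~3). The gain is a fully self-contained and more elementary proof with no appeal to $\psi$ or to outside references; the paper's approach, by contrast, passes through $\Gamma$ and $\psi$ and so extends verbatim to real $t\ge1$, which your discrete difference $F(n+1)-F(n)$ does not immediately do. One cosmetic point: with $a_*=\alpha_1$ the left inequality in \eqref{e:5} becomes an equality at $n=1$; the paper handles this by taking $a_*$ to be the truncated decimal $0.428844044$, for which $G_{a_*}(1)\approx-6.58\times10^{-11}<0$, so you may want to say a word about strict versus non-strict at $n=1$.
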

\begin{proof} The right hand side of (\ref{e:5}) is already known (see \cite{1}). We assume that the right side of (\ref{e:5}) holds. Then we have for all $n\in\mathbb{N}$:
$$
\sqrt{2\pi}\left(\frac{a_*+n}{e}\right)^{a_*+n}\geq n!
$$
or taking logarithms of both sides, a simple calculation gives
$$
\log \frac{n!}{\sqrt{2\pi n}n^ne^{-n}}+(n+1/2)\log n+a^*-(n+a^*)\log(n+a^*)<0.
$$
Letting $n\to\infty$ and applying Stirling's formula yields
\[
\lim\limits_{n\to\infty}\log\frac{n^{\frac{1}{2}-a^*}e^{a^*}}{\left(1+\frac{a^*}{n}\right)^{n+a^*}}<0 \quad \mbox{or}\quad \lim\limits_{n\to\infty}\left(\frac{1}{2}-a^*\right)\log n<0,
\]
which implies that $a^*\geq \frac{1}{2}$. Now, we assume that  the left side inequality of (\ref{e:5}) is valid for all $n=1,2,,\cdots$. Then if we substitute  $n=1$ there we obtain
\begin{equation*}
\sqrt{2\pi}\left(\frac{a_*+1}{e}\right)^{a_*+1}\leq1
\end{equation*}
or
\begin{equation}\label{e:6}
\frac{1}{2}\log (2\pi)+(a_*+1)\log(a_*+1)-a_*-1\leq0.
\end{equation}
Let us define  for $t\geq0$
$$
g(t)=(t+1)\log(t+1)+\frac{1}{2}\log(2\pi)-t-1.
$$
Then clearly, (\ref{e:6}) is equivalent to
\begin{equation}\label{e:7}
g(a_*)\leq0.
\end{equation}
Since
$$
g(0)=\frac{1}{2}\log (2\pi)-1=-0.0810615...<0
$$
and
$$
g(1)=2\log2+\frac{1}{2}\log (2\pi)-2=0.305233...>0,
$$
and $g$ is strictly increasing on $[0,\infty)$, we conclude that $g$ has only one real root on $(0,1)$, which is $t_0=0.428844\cdots$. We therefore obtain from (\ref{e:7}) $g(a_*)\leq0=g(t_0)$, which implies $a_*\leq t_0$ from fact that $g$ is strictly increasing.

Now we shall show that the left hand side of (\ref{e:5}) holds for $a_*=0.428844044...$. For this reason we define
$$
G_b(t)=\frac{1}{2}\log(2\pi)+(t+b)\log(t+b)-t-b-\log\Gamma(t+1), \quad t\geq 1.
$$
Differentiation gives
$$
G_b'(t)=\log(t+b)-\psi(t+1),
$$
where $\psi$ is the digamma function. By \cite[Lemma 1.7]{1}  $G_{a_*}$ is strictly decreasing. Thus we get for $t\geq 1$
$$
G_{a_*}(t)\leq G_{a_*}(1)=-6.58087\times10^{-11}<0,
$$
which gives the left hand side of (\ref{e:5}).

We conclude that the constants $a_*=0.428844044...$ and $a^*=0.5$ are the best possible constants.
\end{proof}

\bibliographystyle{amsplain}

\end{document}